
\documentclass{amsart}

\usepackage{amsmath,amssymb,amsfonts,enumerate,amsthm,graphicx,color,hyperref}

\newcommand{\supp}{\mbox{supp}\,}

\renewcommand{\dim}{\mbox{dim}\,}
\renewcommand{\dim}{\mbox{dim}\,}

\newcommand{\set}{\mbox{set}\,}

\newcommand{\T}{\mathrm}

\newtheorem{thm}{Theorem}[section]
\newtheorem{cor}[thm]{Corollary}
\newtheorem{lem}[thm]{Lemma}

\newtheorem{defn}[thm]{Definition}

\newtheorem{rem}[thm]{Remark}

\def \KK{\mathbb K}
\numberwithin{equation}{section}

\begin{document}
\bibliographystyle{amsplain}

\title[Homological invariants of the Stanley-Reisner ring of ...]{Homological invariants of the Stanley-Reisner ring of a $k$-decomposable simplicial complex}
\author[S. Moradi]{Somayeh Moradi}
\address{Somayeh Moradi, Department of Mathematics,
 Ilam University, P.O.Box 69315-516, Ilam, Iran and School of Mathematics, Institute
 for Research in Fundamental Sciences (IPM), P.O.Box: 19395-5746, Tehran, Iran.} \email{somayeh.moradi1@gmail.com}

\keywords{$k$-decomposable, Alexander dual, regularity, Stanley-Reisner ring.\\
}
\subjclass[2010]{Primary 13D02, 05E45, 05E40; Secondary 16E05}

\begin{abstract}

\noindent We study the regularity and the projective dimension
of the Stanley-Reisner ring of a $k$-decomposable simplicial complex and explain these invariants with a recursive formula.
To this aim, the graded Betti numbers of $k$-decomposable monomial ideals which is the dual concept
for $k$-decomposable simplicial complexes are studied and an inductive formula for the Betti numbers is given.
As a corollary, for a chordal clutter $\mathcal{H}$, an upper bound for $\T{reg}(I(\mathcal{H}))$ is given in terms of the regularities of edge ideals of some chordal clutters which are minors of $\mathcal{H}$.
\end{abstract}

\maketitle
\section*{Introduction}

Squarefree monomial ideals are a class of ideals with strong connections to combinatorics and topology that have been studied extensively by many researchers in the last few years. In this regard the Stanley-Reisner correspondence plays an important role. Finding connections between algebraic invariants of a squarefree monomial ideal and combinatorial and topological invariants of its Stanley-Reisner simplicial complex is of great interest.
$k$-decomposability  is a topological combinatorial concept such as shellability and is related to the
algebraic properties of the Stanley-Reisner ring of a simplicial complex. It was first introduced by Provan and Billera  \cite{Provan+Billera} for pure complexes. An analogous extension was given for non-pure complexes by Woodroofe in \cite{W1}. For $k=0$, this notion is known as vertex decomposable, firstly defined for non-pure complexes in \cite{BW}. Defined in a recursive manner, $k$-decomposable simplicial complexes form a well-behaved class of simplicial complexes.
It is known that a $d$-dimensional simplicial complex is $d$-decomposable if and only if it is shellable (see \cite[Theorem 3.6]{W1}).
In \cite{RY}, the concept of a $k$-decomposable monomial ideal was introduced and it was proved that a simplicial complex $\Delta$ is $k$-decomposable if and only if $I_{\Delta^{\vee}}$ is a $k$-decomposable ideal.

Several recent papers have related some homological invariants of Stanley-Reisner rings such as the Castelnuovo-Mumford regularity and the projective dimension with various invariants of the simplicial complex or the graph associated to the simplicial complex in the case it is a flag complex (see for example \cite{Ha,HTW,KM,kummini,Moh,MVi,VT}).
In this paper, we study the regularity and the projective dimension of a $k$-decomposable monomial ideal and the Stanley-Reisner ring associated to a $k$-decomposable simplicial complex, which extend some existing results about vertex decomposable simplicial complexes.

The paper is structured as follows. In the first section, we present the background
material. In Section 2, first we study the graded Betti numbers of a $k$-decomposable monomial ideal and give an inductive formula for them (see Theorem \ref{ss}). As a corollary, for a $k$-decomposable simplicial complex $\Delta$, the Castelnuovo-Mumford regularity and the projective dimension of the Stanley-Reisner ring $R/I_{\Delta}$ are explained recursively (see Theorem \ref{regp}). This extends the result proved for vertex decomposable simplicial complexes in \cite[Corollary 2.11]{MK} and \cite[Corollary 4.9]{HTW}.  Then for a chordal clutter $\mathcal{H}$ (in the sense of \cite[Definition 4.3]{W1}) with the simplicial vertex $x$ and an edge $e=\{x,x_1,\ldots,x_{d}\}$ containing $x$, we show that
$$\T{reg}(R/I(\mathcal{H}))\leq \max\{\sum_{i=1}^d \T{reg}(R/I(\mathcal{H}\setminus x_i))+(d-1),\T{reg}(R/I(\mathcal{H}/ \{x_1,\ldots,x_d\}))+d\},$$
where  $\mathcal{H}\setminus x_i$'s and $\mathcal{H}/ \{x_1,\ldots,x_d\}$ are chordal minors of $\mathcal{H}$, which are chordal too.

\section{Preliminaries}

In this section, we recall some preliminaries which are needed in the sequel.

\subsection{Simplicial complexes and clutters}

Throughout this paper, we assume that $X=\{x_1, \dots, x_n\}$, $\Delta$ is a simplicial complex on the vertex set $X$, $\KK$ is a field, $R=\KK[X]$ is the ring of polynomials in the variables $x_1, \dots, x_n$ and $I$ is a monomial ideal of $R$. For a monomial ideal $I$,
the unique set of minimal generators of $I$ is denoted by $\mathcal{G}(I)$.

A \textbf{simplicial complex} on a vertex set $X$ is a set $\Delta$  of subsets of $X$ such that

\begin{itemize}
\item for any $x\in X$, $\{x\}\in \Delta$, and
\item  if $F\in \Delta$ and $G\subseteq F$, then $G\in \Delta$.
\end{itemize}

Any element of $\Delta$ is called a \textbf{face} and maximal faces (under inclusion) are called \textbf{facets} of $\Delta$. The set of all facets of a simplicial complex $\Delta$
is denoted by $\mathcal{F}(\Delta)$. A simplicial complex with $\mathcal{F}(\Delta)=\{F_1,\ldots,F_k\}$ is denoted by $\langle F_1,\ldots,F_k\rangle$ and it means that $\Delta$ is generated by the facets $F_1,\ldots,F_k$. The \textbf{dimension} of a face $F\in \Delta$ is defined as $|F|-1$ and is denoted by $\dim(F)$.
Moreover, the dimension of $\Delta$ is defined as $\dim(\Delta)=\max\{\dim(F):\ F\in \Delta\}$.

For a simplicial complex $\Delta$ with the vertex set $X$, the \textbf{Alexander dual simplicial complex} associated to $\Delta$ is defined as
$$\Delta^{\vee}=:\{X\setminus F:\ F\notin \Delta\}.$$

For a subset $W\subseteq X$, let $x^W$ be the monomial $\prod_{x\in W} x$ and let $I=(x^{W_1},\ldots,x^{W_t})$ be a squarefree monomial ideal. The \textbf{Alexander dual ideal} of $I$, denoted by
$I^{\vee}$, is defined as
$$I^{\vee}:=P_{W_1}\cap \cdots \cap P_{W_t},$$
where $P_{W_i}=(x_j:\ x_j\in W_i)$.

One can see that
$$(I_{\Delta})^{\vee}=(x^{F^c} \ : \ F\in \mathcal{F}(\Delta)), $$
where $I_{\Delta}$ is the Stanley-Reisner ideal associated to $\Delta$ and $F^c=X\setminus F$.
Moreover, $(I_{\Delta})^{\vee}=I_{\Delta^{\vee}}$.


A \textbf{clutter} $\mathcal{H}$ on a vertex set $V$ is a set $E$ of subsets of $V$ (called edges) such that
\begin{itemize}
  \item  for any distinct elements $e,e'\in E$,
one has $e\nsubseteq e'$, and
  \item $|e|\geq 2$ for any $e\in E$.
\end{itemize}
The vertex set and the edge set of the clutter $\mathcal{H}$ are denoted by $V(\mathcal{H})$ and $E(\mathcal{H})$, respectively.
There is a correspondence between clutters and the set of minimal
non-faces of simplicial complexes in the following way.

Let $\mathcal{H}$ be a clutter. The \textbf{edge ideal} of $\mathcal{H}$ is an ideal of the ring $\KK[V(\mathcal{H})]$ defined as  $$I(\mathcal{H})=(x^e:\ e\in E(\mathcal{H})).$$ It is easy to see that $I(\mathcal{H})$ can be viewed as the Stanley-Reisner ideal of the simplicial complex
$$\Delta_{\mathcal{H}}=\{F\subseteq V(\mathcal{H}):\ e\nsubseteq F, \text{ for each } e\in E(\mathcal{H})\},$$ i.e., $I(\mathcal{H})=I_{\Delta_{\mathcal{H}}}$. The simplicial complex $\Delta_{\mathcal{H}}$ is called the \textbf{independence complex} of $\mathcal{H}$.

For a clutter $\mathcal{H}$, and a vertex $v\in V(\mathcal{H})$, the \textbf{deletion} $\mathcal{H}\setminus v$ is the clutter on the vertex set $V (\mathcal{H}) \setminus \{v\}$ with the edge set
$\{e :\ e \in E(\mathcal{H}),\ v\notin  e\}$. The \textbf{contraction} $\mathcal{H}/v$ is the clutter on the vertex set
$V (\mathcal{H})\setminus\{v\}$ with edges the minimal sets of $\{e\setminus\{v\} :\ e \in E(\mathcal{H})\}$.
A clutter $\mathcal{K}$ obtained from $\mathcal{H}$ by repeated deletion and/or contraction is called a \textbf{minor} of $\mathcal{H}$.

The concept of a simplicial vertex and a chordal clutter was introduced in \cite[Definitions 4.2 and 4.3]{W1} as follows.

Let $\mathcal{H}$ be a clutter. A vertex $v$ of $\mathcal{H}$ is called a \textbf{simplicial vertex} if for every two edges
$e_1$ and $e_2$ of $\mathcal{H}$ that contain $v$, there is a third edge $e_3$ such that $e_3 \subseteq (e_1 \cup e_2)\setminus \{v\}$.

A clutter $\mathcal{H}$ is called \textbf{chordal}, if any minor of $\mathcal{H}$ has a simplicial vertex.

Woodroofe in \cite{W1} also defined the concept of a
\textbf{containment pair} of a clutter $\mathcal{H}$  as a vertex $v$ and an edge $e$ with $v\in e$ such that
for any edge $e_2\neq e$ with  $v\in e_2$, there exists an edge $e_3\subseteq (e\cup e_2) \setminus \{v\}$.

\subsection{$k$-decomposable simplicial complexes and $k$-decomposable ideals}

\vskip 3mm

For a simplicial complex $\Delta$ and $F\in \Delta$, the link of $F$ in
$\Delta$ is defined as $$\T{lk}(F)=\{G\in \Delta: G\cap
F=\emptyset, G\cup F\in \Delta\},$$ and the deletion of $F$ is the
simplicial complex $$\Delta \setminus F=\{G\in \Delta: F \nsubseteq G\}.$$

Woodroofe in \cite{W1} extended the definition of $k$-decomposability
to non-pure complexes as follows.

\begin{defn}\cite[Definition 3.1]{W1}
{\rm Let $\Delta$ be a simplicial complex on vertex set $V$. Then a face $\sigma$ is called a
\textbf{shedding face} if every face $\tau$  containing $\sigma$ satisfies the following exchange property: for every
$v \in \sigma$ there is $w\in V \setminus \tau$ such that $(\tau \cup \{w\})\setminus \{v\}$ is a face of $\Delta$.}
\end{defn}

\begin{defn}\cite[Definition 3.5]{W1}
{\rm A simplicial complex $\Delta$ is recursively defined to be \textbf{$k$-decomposable} if
either $\Delta$ is a simplex or else has a shedding face $\sigma$ with $\dim(\sigma)\leq k$ such that both $\Delta \setminus \sigma$
and $\T{lk}(\sigma)$ are $k$-decomposable. The complexes $\{\}$ and $\{\emptyset\}$  are considered to be
$k$-decomposable for all $k \geq -1$.}
\end{defn}

Note that $0$-decomposable simplicial complexes are precisely vertex decomposable simplicial complexes.

The notion of decomposable monomial ideals was introduced in \cite{RY} as follows.

For the monomial $u=x_1^{a_1}\cdots x_n^{a_n}$ in $R$, the support of $u$  denoted by $\supp(u)$ is the set $\{x_i:\ a_i\neq 0\}$. For a monomial $M$ in $R$,
set $[u,M] = 1$ if for all $x_i\in\supp(u)$, $x_i^{a_i}\nmid M$. Otherwise set $[u,M]\neq 1$.

For the monomial $u$ and the monomial ideal $I$, set
$$I^u = (M\in \mathcal{G}(I) :\ [u,M]\neq 1)$$ and $$I_u = (M\in \mathcal{G}(I) :\ [u,M]=1).$$

\begin{defn}
{\rm  For a monomial ideal $I$  with $\mathcal{G}(I)=\{M_1,\ldots,M_r\}$, the monomial $u=x_1^{a_1}
\cdots x_n^{a_n}$ is called  a \textbf{shedding monomial} for $I$ if $I_u\neq 0$ and for each $M_i\in \mathcal{G}(I_u)$ and each
$x_{\ell}\in \supp(u)$ there exists $M_j\in \mathcal{G}(I^u)$ such that $M_j:M_i=x_{\ell}$.}
\end{defn}

\begin{defn}
{\rm
A monomial ideal $I$ with $\mathcal{G}(I)=\{M_1,\ldots,M_r\}$ is called  \textbf{$k$-decomposable} if $r=1$ or else has a shedding monomial $u$ with
$|\supp(u)|\leq k+1$ such that the ideals $I_u$ and $I^u$ are $k$-decomposable.}
\end{defn}

\begin{defn}\label{1.2}
{\rm
A monomial ideal $I$ in the ring $R$ has \textbf{linear quotients} if there exists an ordering $f_1, \dots, f_m$ on the minimal generators of $I$ such that the colon ideal $(f_1,\ldots,f_{i-1}):(f_i)$ is generated by a subset of $\{x_1,\ldots,x_n\}$ for all $2\leq i\leq m$. We show this ordering by $f_1<\dots <f_m$ and we call it an order of linear quotients on $\mathcal{G}(I)$.

Let $I$ be a monomial ideal with linear quotients and $f_1<\dots <f_m$ be an order of linear quotients on the minimal generators of $I$. For any $1\leq i\leq m$, $\set_I(f_i)$ is defined as
$$\set_I(f_i)=\{x_k:\ x_k\in (f_1,\ldots, f_{i-1}) : (f_i)\}.$$
}
\end{defn}


The following theorem, which was proved in \cite{T}, is one of our
main tools in the study of regularity of the ring $R/I_{\Delta}$.

\begin{thm}\cite[Theorem 2.1]{T} \label{1.3}
Let $I$ be a squarefree monomial ideal. Then
$\T{pd}(I^{\vee})=\T{reg}(R/I)$.
\end{thm}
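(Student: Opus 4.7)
The plan is to follow Terai's original strategy: compute both sides via Hochster-type formulas, use combinatorial Alexander duality to rewrite the projective dimension side in terms of links in $\Delta$, and compare with the local-cohomology description of regularity. Since $I$ is squarefree, write $I=I_\Delta$ for a simplicial complex $\Delta$ on $V=\{x_1,\dots,x_n\}$; then $I^\vee=I_{\Delta^\vee}$ by the compatibility of the two Alexander dualities already recorded in the preliminaries.

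On the projective dimension side, start from Hochster's formula for the graded Betti numbers of the ideal,
$$\beta_{i,j}(I_{\Delta^\vee}) \;=\; \sum_{W\su V,\ |W|=j}\dim_\KK\tilde{H}_{j-i-2}((\Delta^\vee)_W;\KK).$$
A direct unwinding of the definitions of restriction, link and Alexander dual yields the pivotal combinatorial identity $(\Delta^\vee)_W=(\lk_\Delta(V\setminus W))^{\vee_W}$, where the outer Alexander dual is taken inside the vertex set $W$. Combinatorial Alexander duality on $m=|W|$ vertices, which over a field reads $\tilde{H}_\ell(L^\vee;\KK)\cong\tilde{H}_{m-\ell-3}(L;\KK)$, then converts each summand into $\tilde{H}_{i-1}(\lk_\Delta(V\setminus W);\KK)$. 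Re-indexing by $S=V\setminus W$ and observing that $\lk_\Delta S$ is empty whenever $S\notin\Delta$, I obtain
$$\pd(I_{\Delta^\vee}) \;=\; 1+\max\bigl\{k:\tilde{H}_k(\lk_\Delta\sigma;\KK)\neq 0\text{ for some }\sigma\in\Delta\bigr\}.$$

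For the regularity of $R/I_\Delta$, I would invoke Hochster's formula for local cohomology,
$$\dim_\KK\H^i_\fm(R/I_\Delta)_{-\mathbf{a}} \;=\; \dim_\KK\tilde{H}^{i-|\sigma|-1}(\lk_\Delta\sigma;\KK),\qquad \supp\mathbf{a}=\sigma\in\Delta,$$
together with the formula $\reg(M)=\max\{n+i:\H^i_\fm(M)_n\neq 0\}$. For fixed $\sigma$ and $i$, the value $n+i$ is maximized by taking $\mathbf{a}$ to be the indicator of $\sigma$, which contributes $i-|\sigma|$ to the regularity; hence
$$\reg(R/I_\Delta) \;=\; 1+\max\bigl\{k:\tilde{H}_k(\lk_\Delta\sigma;\KK)\neq 0\text{ for some }\sigma\in\Delta\bigr\},$$
which matches the expression derived for $\pd(I_{\Delta^\vee})$ in the previous paragraph. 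The main difficulty is purely bookkeeping: threading the indices correctly through two distinct Hochster formulas together with combinatorial Alexander duality, and verifying the identity $(\Delta^\vee)_W=(\lk_\Delta(V\setminus W))^{\vee_W}$; the degenerate cases $W=\emptyset$ and $\sigma=\emptyset$ also need a little care so that the convention $\tilde{H}_{-1}(\{\emptyset\};\KK)=\KK$ does not artificially distort either maximum.
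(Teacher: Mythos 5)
The paper does not prove this statement at all: it is quoted verbatim from Terai with a citation, and the only thing the surrounding text relies on is the equality itself. Your proposal therefore cannot be compared to an in-paper argument; what it does is reconstruct Terai's original proof, and it does so correctly in outline. The index bookkeeping checks out: with $|W|=j$, Hochster's Betti number formula gives $\tilde{H}_{j-i-2}((\Delta^{\vee})_W;\KK)$, the identity $(\Delta^{\vee})_W=(\lk_{\Delta}(V\setminus W))^{\vee_W}$ is the standard restriction--link duality, and combinatorial Alexander duality on $j$ vertices sends homological degree $j-i-2$ to $j-(j-i-2)-3=i-1$, so both $\pd(I_{\Delta^{\vee}})$ and $\reg(R/I_{\Delta})$ reduce to $1+\max\{k:\tilde{H}_k(\lk_{\Delta}\sigma;\KK)\neq 0,\ \sigma\in\Delta\}$, the latter via Hochster's local cohomology formula exactly as you describe. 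Two small points of care beyond what you already flag: the theorem implicitly assumes $I$ is a nonzero proper ideal (otherwise $I^{\vee}$ and both sides degenerate), and when you say $\lk_{\Delta}S$ is ``empty'' for $S\notin\Delta$ you should mean the void complex $\{\}$, which has no reduced homology in any degree (in contrast to $\{\emptyset\}$, whose $\tilde{H}_{-1}$ is $\KK$); this is precisely what makes those terms drop out of the sum and also what handles the degenerate instances of Alexander duality (void complex versus full simplex). With those conventions fixed, your plan is a complete and correct route to the statement.
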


\section{Regularity and projective dimension of the Stanley-Reisner ring of a $k$-decomposable simplicial complex}

In this section, first we explain the graded Betti numbers of a $k$-decomposable ideal inductively. Then using this formula, for a $k$-decomposable simplicial compelx $\Delta$, recursive formulas for the regularity and the projective dimension of the Stanley-Reisner ring $R/I_{\Delta}$ are given.
Finally, we give an applicaion on chordal clutters.

The following theorem was proved in \cite{RY}. In order to prove Theorem \ref{ss}, one needs to know the construction of the order of linear quotients for a decomposable ideal. So for the convenience of the reader, the proof is stated.
\begin{thm}\cite[Theorem 2.13]{RY}\label{q}
Any $k$-decomposable ideal has linear quotients.
\end{thm}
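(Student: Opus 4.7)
The plan is to prove Theorem \ref{q} by induction on $r=|\mathcal{G}(I)|$. The base case $r=1$ is vacuous, since an ideal with one generator trivially has linear quotients. For the inductive step, let $u$ be a shedding monomial for $I$ with $|\supp(u)|\leq k+1$, so that by the recursive definition $I^u$ and $I_u$ are both $k$-decomposable with strictly fewer generators than $I$. By the induction hypothesis, there exist orders of linear quotients $f_1<\cdots<f_s$ on $\mathcal{G}(I^u)$ and $g_1<\cdots<g_t$ on $\mathcal{G}(I_u)$. Since the conditions $[u,M]\neq 1$ and $[u,M]=1$ are mutually exclusive, we have $\mathcal{G}(I)=\mathcal{G}(I^u)\sqcup \mathcal{G}(I_u)$, and my candidate order is the concatenation
$$f_1<\cdots<f_s<g_1<\cdots<g_t.$$

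First I would verify that this is a valid order of linear quotients on $\mathcal{G}(I)$. For the initial block $f_1,\ldots,f_s$ nothing needs to be checked beyond the inductive hypothesis on $I^u$. For each $g_j$, the colon ideal splits as
$$(f_1,\ldots,f_s,g_1,\ldots,g_{j-1}):(g_j)=\bigl((f_1,\ldots,f_s):(g_j)\bigr)+\bigl((g_1,\ldots,g_{j-1}):(g_j)\bigr),$$
and the second summand is generated by variables by the induction hypothesis for $I_u$. So the only thing to prove is that $(f_1,\ldots,f_s):(g_j)$ is generated by a subset of the variables.

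The heart of the argument, and the step where the shedding hypothesis is used, is to show that
$$(f_1,\ldots,f_s):(g_j)=\bigl(x_\ell \ : \ x_\ell\in \supp(u)\bigr).$$
For the inclusion $\supseteq$: since $g_j\in \mathcal{G}(I_u)$, the shedding property applied to $g_j$ and each $x_\ell\in\supp(u)$ yields some $M_{j'}\in \mathcal{G}(I^u)=\{f_1,\ldots,f_s\}$ with $M_{j'}:g_j=x_\ell$, so $x_\ell$ lies in the colon ideal. For the inclusion $\subseteq$: pick any $f_i$; since $f_i\in \mathcal{G}(I^u)$, there exists $x_\ell\in\supp(u)$ with $x_\ell^{a_\ell}\mid f_i$, while $[u,g_j]=1$ gives $x_\ell^{a_\ell}\nmid g_j$, so the exponent of $x_\ell$ in $\gcd(f_i,g_j)$ is strictly less than $a_\ell$, forcing $x_\ell\mid f_i/\gcd(f_i,g_j)=f_i:g_j$. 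Hence every generator $f_i:g_j$ of $(f_1,\ldots,f_s):(g_j)$ is a multiple of some variable in $\supp(u)$.

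The main obstacle, and really the only nontrivial point, is the second inclusion above: one must notice that the characterization $[u,M]\neq 1$ gives not merely divisibility by some $x_\ell$, but divisibility by the full power $x_\ell^{a_\ell}$, which is exactly what is needed to cancel against $g_j$ and leave a factor of $x_\ell$. Once this observation is made, the induction closes cleanly, and in fact the argument simultaneously produces the explicit description $\set_I(g_j)=\set_{I_u}(g_j)\cup\supp(u)$ that will be useful for the subsequent Betti number formula.
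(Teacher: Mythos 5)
Your proof is correct and takes essentially the same route as the paper's: induct on $|\mathcal{G}(I)|$, concatenate the linear-quotient orders of $I^u$ and $I_u$, and show that for each $g_j\in\mathcal{G}(I_u)$ the colon ideal $(f_1,\ldots,f_s):(g_j)$ equals $(x_\ell:\ x_\ell\in\supp(u))$ via the shedding condition. You spell out the containment $\subseteq$ (using that $[u,f_i]\neq 1$ gives divisibility by the full power $x_\ell^{a_\ell}$) in more detail than the paper, which simply asserts this step.
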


\begin{proof}
Let $I$ be a $k$-decomposable ideal, $u$ be a shedding monomial for $I$ and inductively assume that $I_u$ and $I^u$ have linear quotients.
Let $f_1<\cdots<f_t$ and $g_{t+1}<\cdots<g_r$ be the order of linear quotients on the minimal generators of $I^u$ and $I_u$, respectively. Then
it is easy to see that
$f_1<\cdots<f_t<g_{t+1}<\cdots<g_r$ is an order of linear quotients on the minimal generators of $I$, since for any $t+1\leq i\leq r$, $(f_1,\ldots,f_t,g_{t+1},\ldots,g_{i-1}):(g_i)=(x_i:\ x_i\in\supp(u))+(g_{t+1},\ldots,g_{i-1}):(g_i)$, which is again generated by some variables.
Indeed, by the definitions of $I_u$ and $I^u$, for any $x\in \supp(u)$ and any $t+1\leq j\leq r$, there exists $1\leq i\leq t$ such that $(f_i):(g_j)=(x)$.
\end{proof}

The next theorem, which is a special case of \cite[Corollary 2.7]{Leila}, is our main tool to prove Theorem \ref{ss}.

\begin{thm}\cite[Corollary 2.7]{Leila}\label{Leila}
Let $I$ be a monomial ideal with linear quotients with the ordering $f_1<\cdots<f_m$ on the minimal generators of $I$.
Then $$\beta_{i,j}(I)=\sum_{\deg(f_t)=j-i} {|\set_I(f_t)|\choose i}.$$
\end{thm}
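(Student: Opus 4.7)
The plan is to build a minimal graded free resolution of $I$ by an iterated mapping cone following the linear-quotient order, and then read off the Betti numbers from its shape.

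Set $I_t = (f_1, \ldots, f_t)$ for $0 \leq t \leq m$, with $I_0 = 0$. By the linear-quotient hypothesis, $(I_{t-1} : f_t)$ equals the ideal generated by the variable set $A_t = \set_I(f_t)$, so $R/(I_{t-1}:f_t)$ is resolved minimally by the Koszul complex $K_\bullet(A_t)$, whose $i$-th free module has rank $\binom{|A_t|}{i}$ with generators in internal degree $i$. The short exact sequence
$$0 \longrightarrow I_{t-1} \longrightarrow I_t \longrightarrow R/(I_{t-1}:f_t)\bigl(-\deg f_t\bigr) \longrightarrow 0,$$
together with the shifted Koszul resolution $K_\bullet(A_t)(-\deg f_t)$ of the quotient, lets me apply the horseshoe lemma to produce a graded free resolution $F^{(t)}_\bullet$ of $I_t$ with
$$F^{(t)}_i \;=\; F^{(t-1)}_i \,\oplus\, K_i(A_t)\bigl(-\deg f_t\bigr).$$

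If every such horseshoe step yields a minimal resolution, then the Betti numbers telescope: the Koszul summand contributed at stage $t$ has rank $\binom{|A_t|}{i}$ and sits in internal degree $i + \deg f_t$, so after iteration
$$\beta_{i,j}(I) \;=\; \sum_{t=1}^m \beta_{i,j}\bigl(K_\bullet(A_t)(-\deg f_t)\bigr) \;=\; \sum_{\deg(f_t)=j-i} \binom{|\set_I(f_t)|}{i},$$
which is the stated formula.

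The main obstacle is therefore minimality at every stage: the connecting block $\alpha^{(t)}\colon K_\bullet(A_t)(-\deg f_t) \to F^{(t-1)}_{\bullet - 1}$ produced by the horseshoe lemma must be chosen with all entries in the graded maximal ideal $\mathfrak{m}$. Here the monomial structure is indispensable. At homological degree zero, for each $x \in A_t$ the monomial $x f_t$ lies in $I_{t-1}$, so there exists a minimal generator $f_{j(x)}$ with $j(x) < t$ that divides $x f_t$; writing $f_{j(x)} = x\,g$ with $g \mid f_t$ and observing that $g=f_t$ would force $f_t \mid f_{j(x)}$ and contradict the minimality of the generators of $I$, the quotient monomial $xf_t/f_{j(x)} = f_t/g$ is automatically nonconstant, hence in $\mathfrak{m}$. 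Setting the $e_x$-component of $\alpha^{(t)}_0$ to this monomial times the basis vector of $f_{j(x)}$, and extending to higher Koszul degrees by the same sort of monomial lift, yields a minimal mapping cone; this is the Herzog--Takayama criterion for monomial ideals with linear quotients. Once minimality is secured, the Betti count above concludes the argument.
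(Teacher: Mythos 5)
The paper does not actually prove this statement: it is imported verbatim from Sharifan--Varbaro \cite{Leila} (their Corollary 2.7), so there is no internal proof to compare against. Your iterated mapping-cone (horseshoe) argument is the standard proof of this fact for monomial ideals --- it is essentially Herzog--Takayama's Lemma 1.5 together with its corollary on Betti numbers --- and your bookkeeping is correct: stage $t$ contributes a Koszul summand of rank $\binom{|\set_I(f_t)|}{i}$ concentrated in internal degree $i+\deg f_t$, and summing over $t$ with $\deg f_t=j-i$ gives the stated formula once minimality of the assembled resolution is known.

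The one soft spot is the minimality of the connecting block in homological degrees $\geq 2$. Your verification is complete only for the block $K_1(A_t)(-\deg f_t)\to F^{(t-1)}_0$, where the divisibility argument genuinely forces a nonconstant monomial entry. For higher $i$, the phrase ``choose the lift with entries in $\mathfrak{m}$'' conceals the real issue: any two lifts of a fixed element through the minimal differential $d\colon F^{(t-1)}_{i-1}\to F^{(t-1)}_{i-2}$ differ by an element of $\mathrm{im}(d_i)\subseteq \mathfrak{m}F^{(t-1)}_{i-1}$, so either every lift lands in $\mathfrak{m}F^{(t-1)}_{i-1}$ or none does; minimality at level $i$ is a property to be proved, not a choice to be made. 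Nor does a degree count settle it when the generators have different degrees, since a unit entry could a priori connect a Koszul basis element of degree $\deg f_t+i$ to a basis element arising from an earlier generator of degree $\deg f_t+1$. You cover this by invoking the Herzog--Takayama criterion, which is precisely the assertion that the iterated mapping cone of a monomial ideal with linear quotients is minimal; as a reduction to that prior result the proof stands, but as a self-contained argument the higher-degree minimality still needs to be supplied.
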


We also use the following lemma, which can be easily proved by induction on $m$.
\begin{lem}\label{lem1}
Let $i,k$ and $m$ be non-negative integers. Then $${k+m\choose i}=\sum_{\ell=0}^m {m\choose \ell}{k\choose i-\ell}.$$
\end{lem}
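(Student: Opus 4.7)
This is the Vandermonde convolution identity, so the plan is straightforward; I would simply follow the author's hint and induct on $m$, using Pascal's rule to telescope the step. The only mild subtlety is that the sum on the right hand side uses the convention $\binom{k}{j}=0$ for $j<0$ or $j>k$, so I will allow myself to extend the range of summation freely without changing the value.

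For the base case $m=0$, both sides collapse to $\binom{k}{i}$ since the only nonzero term on the right is $\ell=0$. For the inductive step, assume the identity for $m$ and prove it for $m+1$. I would apply Pascal's identity to the left hand side:
$$\binom{k+m+1}{i}=\binom{k+m}{i}+\binom{k+m}{i-1}.$$
Invoking the inductive hypothesis on each term gives
$$\binom{k+m+1}{i}=\sum_{\ell=0}^{m}\binom{m}{\ell}\binom{k}{i-\ell}+\sum_{\ell=0}^{m}\binom{m}{\ell}\binom{k}{i-1-\ell}.$$
Then I would reindex the second sum by setting $\ell':=\ell+1$, rewrite it as $\sum_{\ell'=1}^{m+1}\binom{m}{\ell'-1}\binom{k}{i-\ell'}$, and combine the two sums into
$$\sum_{\ell=0}^{m+1}\Bigl[\binom{m}{\ell}+\binom{m}{\ell-1}\Bigr]\binom{k}{i-\ell}.$$
One more application of Pascal's identity, $\binom{m}{\ell}+\binom{m}{\ell-1}=\binom{m+1}{\ell}$, yields precisely the right hand side for $m+1$.

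There is no real obstacle here; the only thing to watch is bookkeeping at the endpoints of the summation (the terms $\ell=0$ and $\ell=m+1$ where one of the two binomials $\binom{m}{\ell}$ or $\binom{m}{\ell-1}$ vanishes), which is handled uniformly by the convention above. Alternatively, one could give a one-line proof by comparing coefficients of $x^i$ in the identity $(1+x)^{k+m}=(1+x)^k(1+x)^m$, but the inductive proof matches the style of the paper.
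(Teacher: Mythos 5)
Your proof is correct and follows exactly the route the paper indicates: the paper gives no written proof of Lemma \ref{lem1}, merely remarking that it ``can be easily proved by induction on $m$,'' and your induction on $m$ via Pascal's rule (with the standard convention $\binom{k}{j}=0$ for $j<0$ or $j>k$ handling the endpoint terms) is a complete and careful execution of that suggestion.
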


The following theorem generalizes \cite[Theorem 2.8]{MK}.
\begin{thm}\label{ss}
Let $I$ be a $k$-decomposable ideal with the shedding monomial $u$. Then $$\beta_{i,j}(I)=\beta_{i,j}(I^u)+\sum_{l=0}^m {m\choose l} \beta_{i-l,j-l}(I_u),$$ where $m=|\supp(u)|$.
\end{thm}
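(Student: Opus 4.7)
The strategy combines Theorems \ref{q} and \ref{Leila}. By the proof of Theorem \ref{q}, the ideal $I$ admits an order of linear quotients of the shape
\[
f_1 < \cdots < f_t < g_{t+1} < \cdots < g_r,
\]
where $f_1 < \cdots < f_t$ and $g_{t+1} < \cdots < g_r$ are the orders of linear quotients on the minimal generators of $I^u$ and $I_u$ respectively. Since Theorem \ref{Leila} expresses $\beta_{i,j}$ as a sum of $\binom{|\set_I(h)|}{i}$ over the generators $h$, the proof amounts to computing $\set_I(h)$ on each of the two blocks and then rearranging binomial sums.

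For $f_s$ with $1\le s\le t$, the generators preceding $f_s$ in the ordering of $I$ coincide with those preceding it in the ordering of $I^u$, so $\set_I(f_s) = \set_{I^u}(f_s)$. For $g_s$ with $t+1\le s\le r$, the computation appearing in the proof of Theorem \ref{q} gives
\[
(f_1,\ldots,f_t,g_{t+1},\ldots,g_{s-1}):(g_s) = \bigl(x_\ell : x_\ell \in \supp(u)\bigr) + (g_{t+1},\ldots,g_{s-1}):(g_s),
\]
whence $\set_I(g_s) = \supp(u) \cup \set_{I_u}(g_s)$. The key point is that this union is disjoint: if a variable $x_\ell\in\supp(u)$ lay in $\set_{I_u}(g_s)$, then some $g_j\in\mathcal{G}(I_u)$ would satisfy $(g_j):(g_s) = (x_\ell)$, forcing $x_\ell\mid g_j$; but $g_j\in\mathcal{G}(I_u)$ requires $[u,g_j]=1$, which in the squarefree setting of the paper forbids $x_\ell \mid g_j$. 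Hence $|\set_I(g_s)| = m + |\set_{I_u}(g_s)|$.

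Applying Theorem \ref{Leila} to $I$ and splitting the sum over generators according to the two blocks now produces
\[
\beta_{i,j}(I) = \beta_{i,j}(I^u) + \sum_{\deg(g_s)=j-i}\binom{m + |\set_{I_u}(g_s)|}{i}.
\]
Lemma \ref{lem1}, applied with $k=|\set_{I_u}(g_s)|$, rewrites each summand on the right as $\sum_{\ell=0}^m \binom{m}{\ell}\binom{|\set_{I_u}(g_s)|}{i-\ell}$. Interchanging the two summations and noting that $\deg(g_s)=j-i=(j-\ell)-(i-\ell)$, the inner sum over $g_s$ is exactly $\beta_{i-\ell,j-\ell}(I_u)$ by Theorem \ref{Leila} applied to $I_u$, which yields the stated identity.

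The main obstacle is establishing the disjointness $\supp(u)\cap\set_{I_u}(g_s)=\emptyset$, since without it the splitting of $\binom{|\set_I(g_s)|}{i}$ via Lemma \ref{lem1} would not match the shape of the right-hand side of the theorem. Once that set-theoretic fact is secured, the remainder is routine binomial bookkeeping, interchange of finite sums, and a second application of the Betti formula of Theorem \ref{Leila} to $I_u$.
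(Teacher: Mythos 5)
Your overall architecture is exactly that of the paper: concatenate the linear-quotient orders of $I^u$ and $I_u$ as in the proof of Theorem \ref{q}, compute $\set_I$ blockwise, verify $\supp(u)\cap\set_{I_u}(g_s)=\emptyset$, and finish with Theorem \ref{Leila} and the Vandermonde identity of Lemma \ref{lem1}; the binomial bookkeeping at the end is correct. The one place where your argument does not cover the scope of the statement is the disjointness step. You rule out $x_\ell\in\supp(u)\cap\set_{I_u}(g_s)$ by arguing that $[u,g_j]=1$ ``in the squarefree setting'' forbids $x_\ell\mid g_j$. But Theorem \ref{ss} is stated for an arbitrary $k$-decomposable monomial ideal (the passage to squarefree ideals happens only later, in Theorem \ref{regp}), and for a non-squarefree $u$ the condition $[u,g_j]=1$ only says $x_\ell^{a_\ell}\nmid g_j$, where $a_\ell$ is the exponent of $x_\ell$ in $u$; it does not forbid $x_\ell\mid g_j$ when $a_\ell\ge 2$. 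So your contradiction evaporates outside the squarefree case: a priori two generators $g_j,g_s$ of $I_u$ could have $x_\ell$-exponents $1$ and $0$, giving $(g_j):(g_s)=(x_\ell)$ and hence $x_\ell\in\set_{I_u}(g_s)$.

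The paper closes this step using the shedding-monomial condition rather than squarefreeness: for each generator $g$ of $I_u$ and each $x_\ell\in\supp(u)$ there is $M_j\in\mathcal{G}(I^u)$ with $M_j:g=x_\ell$. Since $M_j\in\mathcal{G}(I^u)$ must satisfy $x_k^{a_k}\mid M_j$ for some $x_k\in\supp(u)$, and for $k\ne\ell$ the exponent of $x_k$ in $M_j$ is at most that in $g$, which is at most $a_k-1$, the only possibility is $k=\ell$; this forces the exponent of $x_\ell$ in $g$ to equal $a_\ell-1$. Hence all generators of $I_u$ have the same $x_\ell$-exponent, so $x_\ell$ divides no colon $(g_j):(g_s)$, and disjointness holds in full generality. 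If you insert this argument (or explicitly restrict your claim to squarefree ideals, which would still suffice for the applications in Section 2), your proof is complete and coincides with the paper's.
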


\begin{proof}
For a monomial $f= x_1^{a_1}\cdots x_n^{a_n}$ and $1\leq i\leq n$,  we set $v_i(f)=a_i$. By Theorem \ref{q}, if
$f_1<\cdots<f_t$ is an order of linear quotients on the minimal generators of $I^u$   and $g_{t+1}<\cdots<g_r$ is an order of linear quotients on the minimal generators of $I_u$, then $f_1<\cdots<f_t<g_{t+1}<\cdots<g_r$ is an order of linear quotients on the minimal generators of $I$,
$\set_I(f_i)=\set_{I^u}(f_i)$ for any $1\leq i\leq t$ and $\set_I(g_i)=\supp(u)\bigcup \set_{I_u}(g_i)$ for any $t+1\leq i\leq r$. Also for any $t+1\leq i \leq r$, $\supp(u)\bigcap \set_{I_u}(g_i)=\emptyset$, since for any $x_{\ell}\in \supp(u)$ and any distinct integers $i$ and $j$,  $v_{\ell}(g_i)=v_{\ell}(g_j)=v_{\ell}(u)-1$ and then $x_{\ell}\nmid (g_i):(g_j)$.  Thus $|\set_I(g_i)|=|\set_{I_u}(g_i)|+m$. Now by Theorem \ref{Leila},
$$\beta_{i,j}(I)=\sum_{\deg(f_s)=j-i}{|\set_{I}(f_s)|\choose i}+
\sum_{\deg(g_s)=j-i}{|\set_{I}(g_s)|\choose i}.$$
Thus
$$\beta_{i,j}(I)=\sum_{\deg(f_s)=j-i}{|\set_{I^{u}}(f_s)|\choose i}+
\sum_{\deg(g_s)=j-i}{|\set_{I_{u}}(g_s)|+m\choose i}.$$
Applying Lemma \ref{lem1}, we have
$${|\set_{I_{u}}(g_s)|+m\choose i}=\sum_{\ell=0}^m{m \choose \ell}{|\set_{I_{u}}(g_s)|\choose i-\ell}.$$
Thus
$$\beta_{i,j}(I)=\beta_{i,j}(I^u)+\sum_{\ell=0}^m{m \choose \ell}\beta_{i-\ell,j-\ell}(I_u).$$
\end{proof}

In the following corollary, recursive formulas for the regularity and the projective dimension of a $k$-decomposable ideal are presented.

\begin{cor}\label{corvd}
Let $I$ be a $k$-decomposable ideal with the shedding monomial $u$ and $m=|\supp(u)|$. Then
\begin{itemize}
 \item [(i)]  $\T{pd}(I)=\max\{\T{pd}(I^u),\T{pd}(I_u)+m\}$, and
  \item [(ii)] $\T{reg}(I)=\max\{\T{reg}(I^u),\T{reg}(I_u)\}$.
\end{itemize}
\end{cor}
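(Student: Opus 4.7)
The plan is to read off both statements directly from the Betti number identity established in Theorem \ref{ss}, taking advantage of the fact that every term appearing on the right-hand side is non-negative, so no cancellation is possible.

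First I would recall the standard characterizations
$$\pd(I)=\max\{i:\beta_{i,j}(I)\neq 0 \text{ for some } j\},\qquad \reg(I)=\max\{j-i:\beta_{i,j}(I)\neq 0\},$$
and rewrite the formula of Theorem \ref{ss} as
$$\beta_{i,j}(I)=\beta_{i,j}(I^u)+\sum_{\ell=0}^{m}\binom{m}{\ell}\beta_{i-\ell,j-\ell}(I_u).$$
Since $\beta_{i,j}(I^u),\beta_{i-\ell,j-\ell}(I_u)\geq 0$ and $\binom{m}{\ell}\geq 0$, $\beta_{i,j}(I)$ is non-zero exactly when some summand is non-zero.

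For part (i), I would observe that the summand $\binom{m}{\ell}\beta_{i-\ell,j-\ell}(I_u)$ is non-zero only if $i-\ell\leq \pd(I_u)$, i.e.\ $i\leq \pd(I_u)+\ell\leq \pd(I_u)+m$, the maximum being attained for $\ell=m$ (taking a degree $j$ where $\beta_{\pd(I_u),j-m}(I_u)\neq 0$). Together with the constraint $i\leq \pd(I^u)$ coming from the first summand, this gives $\pd(I)=\max\{\pd(I^u),\pd(I_u)+m\}$. Note that the term $\ell=m$ yields a non-zero contribution because $\binom{m}{m}=1$, so there is genuinely no cancellation possible.

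For part (ii), I would note the shift $(i,j)\mapsto (i-\ell,j-\ell)$ preserves the difference $j-i$. Hence $\beta_{i-\ell,j-\ell}(I_u)\neq 0$ forces $j-i\leq \reg(I_u)$, while $\beta_{i,j}(I^u)\neq 0$ forces $j-i\leq \reg(I^u)$. Conversely, taking a pair $(i',j')$ realizing $\reg(I_u)=j'-i'$ and setting $(i,j)=(i'+m,j'+m)$, the $\ell=m$ summand is $\beta_{i',j'}(I_u)\neq 0$, so $\reg(I_u)$ is attained; similarly $\reg(I^u)$ is attained by the first summand. Therefore $\reg(I)=\max\{\reg(I^u),\reg(I_u)\}$.

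There is essentially no hard step here; the only thing to be careful about is the non-cancellation argument, which relies solely on non-negativity of Betti numbers and binomial coefficients. Everything else is a direct extraction from Theorem \ref{ss}.
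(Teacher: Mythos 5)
Your proposal is correct and matches the paper's intent exactly: the paper states this corollary as an immediate consequence of Theorem \ref{ss} without further proof, and the justification it implicitly relies on is precisely your observation that all terms in the Betti number formula are non-negative (so no cancellation occurs), combined with the characterizations $\T{pd}(I)=\max\{i:\beta_{i,j}(I)\neq 0\}$ and $\T{reg}(I)=\max\{j-i:\beta_{i,j}(I)\neq 0\}$ and the fact that the shift $(i,j)\mapsto(i-\ell,j-\ell)$ preserves $j-i$. Your explicit attainment arguments via the $\ell=m$ term are a welcome bit of extra care.
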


H\`{a} in \cite{Ha} gave an upper bound for the regularity of $R/I_{\Delta}$ for an arbitrary simplicial complex $\Delta$ in terms of the deletion and the link of a face of $\Delta$ as follows.

\begin{thm}\label{ha}\cite[Theorem 3.4]{Ha}
Let $\Delta$ be a simplicial complex and let $\sigma$ be a face of $\Delta$. Then
\begin{equation}
\T{reg}(R/I_{\Delta})\leq\max \{\T{reg}(R/I_{\Delta\setminus\sigma}),\T{reg}(R/I_{\T{lk}(\sigma)})+|\sigma|\}.
\end{equation}
\end{thm}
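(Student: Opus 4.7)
The plan is to derive the inequality from a single short exact sequence of graded $R$-modules, namely
$$0 \longrightarrow \bigl(R/(I_\Delta : x^\sigma)\bigr)(-|\sigma|) \stackrel{\cdot x^\sigma}{\longrightarrow} R/I_\Delta \longrightarrow R/(I_\Delta + (x^\sigma)) \longrightarrow 0,$$
and then to identify the kernel and the cokernel as (shifted) Stanley-Reisner rings of $\T{lk}(\sigma)$ and $\Delta\setminus\sigma$, respectively.

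First, I would verify the two ideal identifications combinatorially. Since $\sigma\in\Delta$, no minimal non-face of $\Delta$ is contained in $\sigma$, so the minimal non-faces of $\Delta\setminus\sigma$ are exactly those of $\Delta$ together with $\sigma$ itself; hence $I_\Delta + (x^\sigma) = I_{\Delta\setminus\sigma}$. For the colon, a squarefree monomial $x^G$ lies in $(I_\Delta : x^\sigma)$ if and only if $G\cup\sigma\notin\Delta$. Replacing $G$ by $G\setminus\sigma$ does not change $G\cup\sigma$, so the minimal generators may be taken with $G\subseteq X\setminus\sigma$; these are precisely the minimal non-faces of $\T{lk}(\sigma)$ viewed as a complex on $X\setminus\sigma$. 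Thus $(I_\Delta : x^\sigma) = I_{\T{lk}(\sigma)}R$, and since $R/(I_\Delta : x^\sigma)$ differs from $\KK[X\setminus\sigma]/I_{\T{lk}(\sigma)}$ only by adjoining a polynomial ring in the variables of $\sigma$, their Castelnuovo-Mumford regularities coincide.

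Second, I would invoke the standard regularity estimate for a short exact sequence $0\to A\to B\to C\to 0$ of finitely generated graded modules, namely $\T{reg}(B)\le \max\{\T{reg}(A),\T{reg}(C)\}$, together with the shift identity $\T{reg}(M(-d))=\T{reg}(M)+d$. Applied to the sequence above this yields exactly
$$\T{reg}(R/I_\Delta)\le \max\{\T{reg}(R/I_{\T{lk}(\sigma)})+|\sigma|,\ \T{reg}(R/I_{\Delta\setminus\sigma})\},$$
as claimed.

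The main technical point I expect is the careful identification of $(I_\Delta : x^\sigma)$ with $I_{\T{lk}(\sigma)}$ and the observation that adjoining the variables in $\sigma$ to the quotient ring does not alter the regularity. Everything else is a routine application of the standard behavior of regularity under short exact sequences and degree shifts.
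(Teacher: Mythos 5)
Your argument is correct: the short exact sequence $0 \to \bigl(R/(I_\Delta : x^\sigma)\bigr)(-|\sigma|) \to R/I_\Delta \to R/(I_\Delta+(x^\sigma)) \to 0$, the identifications $(I_\Delta : x^\sigma) = I_{\T{lk}(\sigma)}R$ and $I_\Delta+(x^\sigma)=I_{\Delta\setminus\sigma}$ (the latter holds as an ideal identity even though a minimal non-face of $\Delta$ strictly containing $\sigma$ is no longer minimal in $\Delta\setminus\sigma$, a harmless imprecision in your phrasing), and the standard bound $\T{reg}(B)\le\max\{\T{reg}(A),\T{reg}(C)\}$ together yield exactly the stated inequality. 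The paper gives no proof of this theorem---it is quoted from H\`a's paper---and your argument is the standard one used in that source, so the two approaches coincide.
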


As one of the main results of this paper, in Theorem \ref{regp}, it is shown that the inequality in Theorem \ref{ha} becomes an equality for a $k$-decomposable simplicial complex and a shedding face $\sigma$. It also generalizes \cite[Corollary 2.11]{MK}. To this aim we use the following theorem.

\begin{thm}\cite[Theorem 2.10]{RY}
A $d$-dimensional simplicial complex $\Delta$ is $k$-decomposable if and only if $I_{\Delta^{\vee}}$ is a squarefree $k$-decomposable ideal for any $k\leq d$.
\end{thm}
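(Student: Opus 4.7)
The plan is to prove the equivalence by induction on the number of facets of $\Delta$, equivalently the number of minimal generators of $I_{\Delta^{\vee}}=(x^{F^c}:F\in\mathcal{F}(\Delta))$, exploiting the bijection $\sigma\leftrightarrow x^\sigma$ between faces of $\Delta$ and squarefree monomials in $R$. Since $|\supp(x^\sigma)|=|\sigma|=\dim(\sigma)+1$, the dimension condition $\dim(\sigma)\leq k$ matches the support condition $|\supp(u)|\leq k+1$ exactly. The base case is $|\mathcal{F}(\Delta)|=1$: then $\Delta$ is a simplex and $I_{\Delta^\vee}$ is principal, so both sides are $k$-decomposable by definition. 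Note that in the reverse direction, if $u$ is a squarefree shedding monomial of $I_{\Delta^\vee}$ then $I_u\neq 0$ forces $\supp(u)\in\Delta$, so $u=x^\sigma$ for a genuine face $\sigma\in\Delta$.

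For the inductive step, with $u=x^\sigma$, I would establish three structural claims. First, the splitting $\mathcal{G}(I_{\Delta^\vee})=\mathcal{G}(I^u)\sqcup\mathcal{G}(I_u)$ corresponds to partitioning $\mathcal{F}(\Delta)$ by whether $\sigma\not\subseteq F$ or $\sigma\subseteq F$, because $[u,x^{F^c}]=1$ if and only if $\sigma\cap F^c=\emptyset$. Second, $I_u$ has the same monomial generators as $I_{(\lk\sigma)^\vee}$, since the facets of $\lk\sigma$ are $\{F\setminus\sigma:F\in\mathcal{F}(\Delta),\,\sigma\subseteq F\}$ and their complements in $X\setminus\sigma$ coincide with $F^c$. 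Third, under the shedding hypothesis, $I^u$ has the same monomial generators as $I_{(\Delta\setminus\sigma)^\vee}$.

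The equivalence of the shedding monomial and shedding face conditions follows from the correspondence. Writing $M_i=x^{F^c}\in\mathcal{G}(I_u)$ (so $\sigma\subseteq F$) and $M_j=x^{G^c}\in\mathcal{G}(I^u)$, the requirement $M_j:M_i=x_\ell$ unwinds to $F\setminus G=\{\ell\}$. Given this, any $w\in G\setminus F$ (nonempty because $G$ is a facet of $\Delta$ while $G\cap F=F\setminus\{\ell\}\subsetneq F$) yields $(F\cup\{w\})\setminus\{\ell\}\subseteq G\in\Delta$, which is exactly the exchange property applied at $\tau=F$ and $v=\ell$. Conversely, the exchange property produces a face $(F\setminus\{\ell\})\cup\{w\}$, extendable to a facet $G$ that must omit $\ell$ (else $F\cup\{w\}\in\Delta$ would violate maximality of $F$), so $F\setminus G=\{\ell\}$ and the shedding monomial condition follows; the reduction from all faces $\tau\supseteq\sigma$ to facets $\tau$ uses that any face extends to a facet.

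The main obstacle is verifying the third structural claim, namely $\mathcal{G}(I^u)=\mathcal{G}(I_{(\Delta\setminus\sigma)^\vee})$. A priori the facets of $\Delta\setminus\sigma$ comprise both the facets of $\Delta$ not containing $\sigma$ and possibly new maximal faces of the form $F\setminus\{v\}$ with $v\in\sigma$ and $\sigma\subseteq F\in\mathcal{F}(\Delta)$. The point is that under the shedding property the exchange step produces $w\notin F$ (in particular $w\notin\sigma$) with $(F\setminus\{v\})\cup\{w\}\in\Delta$, a face of $\Delta\setminus\sigma$ strictly containing $F\setminus\{v\}$; this contradicts maximality and rules out the extra facets. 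Thus the generators of $I^u$ and $I_{(\Delta\setminus\sigma)^\vee}$ agree. With all three identifications in place, the inductive hypothesis on $\lk\sigma$ and $\Delta\setminus\sigma$ (each having strictly fewer facets than $\Delta$) closes the induction in both directions.
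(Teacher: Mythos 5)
The paper never proves this statement---it is imported verbatim from \cite{RY}---so there is no internal proof to compare against; the closest point of contact is the proof of Theorem \ref{regp}, which silently relies on exactly the two identifications you establish, namely $(I_{\Delta^{\vee}})^u=I_{(\Delta\setminus\sigma)^{\vee}}$ and $(I_{\Delta^{\vee}})_u=I_{\T{lk}(\sigma)^{\vee}}$ for $u=x^{\sigma}$. Your argument is the natural duality proof and is essentially correct: the dictionary $[u,x^{F^c}]=1\Leftrightarrow\sigma\subseteq F$ is right, the translation of $M_j:M_i=x_\ell$ into $F\setminus G=\{x_\ell\}$ with $x_\ell\notin G$ is right, and both directions of the shedding-face/shedding-monomial equivalence go through, including the reduction from arbitrary faces $\tau\supseteq\sigma$ to facets and the use of the exchange property to kill the would-be extra facets of $\Delta\setminus\sigma$.

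Two points deserve to be made explicit. First, in the reverse direction you simply posit that the shedding monomial $u$ is squarefree, whereas the definition allows arbitrary exponents; you should either invoke the convention that ``squarefree $k$-decomposable'' means all shedding monomials in the recursion are squarefree, or note that for a squarefree ideal $I$ the ideals $I_u$ and $I^u$ depend only on the variables occurring in $u$ with exponent exactly $1$ (since $x_i^{a_i}\nmid M$ is automatic for squarefree $M$ when $a_i\geq 2$), so $u$ may be replaced by the product of those variables without weakening the shedding condition or increasing $|\supp(u)|$. Second, you assert without argument that every facet of $\Delta\setminus\sigma$ not already a facet of $\Delta$ has the form $F\setminus\{v\}$ with $v\in\sigma$ and $\sigma\subseteq F\in\mathcal{F}(\Delta)$; this is true (if $H$ is maximal in $\Delta\setminus\sigma$ and sits inside a facet $F$, then $\sigma\subseteq F$ by maximality of $H$, and choosing $v\in\sigma\setminus H$ gives $H\subseteq F\setminus\{v\}\in\Delta\setminus\sigma$, hence equality), but it is the one step of the ``third structural claim'' that needs to be written down. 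With these two remarks supplied, the induction on the number of facets closes correctly in both directions.
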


\begin{thm}\label{regp}
Let $\Delta$ be a $k$-decomposable simplicial complex on the vertex set $X$ with the shedding face $\sigma$. Then
\begin{itemize}
  \item [(i)] $\T{reg}(R/I_{\Delta})=\max \{\T{reg}(R/I_{\Delta\setminus\sigma}),\T{reg}(R/I_{\T{lk}(\sigma)})+|\sigma|\}$,
  \item [(ii)] $\T{pd}(R/I_{\Delta})=\max\{\T{pd}(R/I_{\Delta\setminus\sigma}),\T{pd}(R/I_{\T{lk}(\sigma)})\},$
\end{itemize}
where $I_{\Delta\setminus\sigma}$ and $I_{\T{lk}(\sigma)}$ are Stanley-Reisner ideals of $\Delta\setminus\sigma$ and $\T{lk}(\sigma)$
on the vertex sets $X$ and  $X\setminus \sigma$, respectively.
\end{thm}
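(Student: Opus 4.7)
My plan is to translate the statement through Alexander duality and apply Corollary \ref{corvd} to $I_{\Delta^{\vee}}$. Set $u = \prod_{v \in \sigma} v$, so that $\supp(u) = \sigma$ and $|\supp(u)| = |\sigma|$. The ideal $I_{\Delta^{\vee}}$ is $k$-decomposable by \cite[Theorem 2.10]{RY}, and inspection of that correspondence shows that $u$ is the shedding monomial associated to $\sigma$. Since the minimal generators of $I_{\Delta^{\vee}}$ are the monomials $x^{F^c}$ for $F \in \mathcal{F}(\Delta)$, splitting according to whether $[u, x^{F^c}] = 1$ gives that $(I_{\Delta^{\vee}})_u$ is generated by those $x^{F^c}$ with $\sigma \subseteq F$, while $(I_{\Delta^{\vee}})^u$ is generated by those with $\sigma \not\subseteq F$.

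The next step is to identify these sub-ideals with $I_{\lk(\sigma)^{\vee}}$ and $I_{(\Delta\setminus\sigma)^{\vee}}$ respectively. For the link, the facets of $\lk(\sigma)$ are the sets $F\setminus\sigma$ with $F \in \mathcal{F}(\Delta)$ and $\sigma \subseteq F$, so the generators of $I_{\lk(\sigma)^{\vee}}$ in $\KK[X\setminus\sigma]$ are the same monomials $x^{F^c}$ that generate $(I_{\Delta^{\vee}})_u$ in $R$; extension to a larger polynomial ring preserves graded Betti numbers, regularity, and projective dimension, so the invariants agree. The more subtle identification $(I_{\Delta^{\vee}})^u = I_{(\Delta\setminus\sigma)^{\vee}}$ reduces to showing that $\mathcal{F}(\Delta\setminus\sigma) = \{F \in \mathcal{F}(\Delta) : \sigma \not\subseteq F\}$. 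If $G$ is a facet of $\Delta\setminus\sigma$ strictly contained in some $F \in \mathcal{F}(\Delta)$, then one must have $\sigma\subseteq F$; choosing $v\in\sigma\setminus G$ and applying the shedding property to $\tau = F$ and $v$ produces $w\notin F$ with $(F\cup\{w\})\setminus\{v\}\in\Delta\setminus\sigma$, and this face strictly contains $G$, contradicting maximality. I expect this step to be the main technical point.

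Once the identifications are in place, Corollary \ref{corvd} gives the required recursions at the level of $I_{\Delta^{\vee}}$. For (i), Theorem \ref{1.3} gives $\reg(R/I_{\Delta}) = \pd(I_{\Delta^{\vee}})$, so Corollary \ref{corvd}(i) with $m = |\sigma|$ yields $\reg(R/I_{\Delta}) = \max\{\pd(I_{(\Delta\setminus\sigma)^{\vee}}), \pd(I_{\lk(\sigma)^{\vee}}) + |\sigma|\} = \max\{\reg(R/I_{\Delta\setminus\sigma}), \reg(R/I_{\lk(\sigma)}) + |\sigma|\}$. For (ii), the companion identity $\reg(I^{\vee}) = \pd(R/I)$, which follows from Theorem \ref{1.3} via the shifts $\reg(I) = \reg(R/I)+1$ and $\pd(R/I) = \pd(I)+1$, combines with Corollary \ref{corvd}(ii) to give $\pd(R/I_{\Delta}) = \reg(I_{\Delta^{\vee}}) = \max\{\reg(I_{(\Delta\setminus\sigma)^{\vee}}), \reg(I_{\lk(\sigma)^{\vee}})\} = \max\{\pd(R/I_{\Delta\setminus\sigma}), \pd(R/I_{\lk(\sigma)})\}$.
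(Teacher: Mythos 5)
Your proposal is correct and follows essentially the same route as the paper: dualize, apply Corollary \ref{corvd} to $I_{\Delta^{\vee}}$ with shedding monomial $x^{\sigma}$, identify $(I_{\Delta^{\vee}})^{u}$ and $(I_{\Delta^{\vee}})_{u}$ with $I_{(\Delta\setminus\sigma)^{\vee}}$ and $I_{\T{lk}(\sigma)^{\vee}}$, and translate back via Terai's theorem. The only cosmetic difference is that you verify $\mathcal{F}(\Delta\setminus\sigma)=\{F\in\mathcal{F}(\Delta):\ \sigma\nsubseteq F\}$ directly from the exchange property of a shedding face, whereas the paper deduces the same fact from the dual shedding-monomial property of $x^{\sigma}$.
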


\begin{proof}
$(i)$ By \cite[Theorem 2.10]{RY}, $I_{\Delta^{\vee}}$ is a decomposable ideal and $x^{\sigma}$ is a shedding monomial for $I_{\Delta^{\vee}}$.  Also by Theorem \ref{1.3}, $\T{reg}(R/I_{\Delta})=\T{pd}(I_{\Delta^{\vee}})$. Now, using Corollary \ref{corvd},
$$\T{pd}(I_{\Delta^{\vee}})=\max\{\T{pd}((I_{\Delta^{\vee}})^u),\T{pd}((I_{\Delta^{\vee}})_u)+|\sigma|\}.$$

Let $\Delta=\langle F_1,\ldots,F_m\rangle$ and $\sigma=\{x_1,\ldots,x_t\}$. Then $I_{\Delta^{\vee}}=(x^{F_1^c},\ldots,x^{F_m^c})$, where $F_i^c=X\setminus F_i$.
Thus $(I_{\Delta^{\vee}})^u=(x^{F_i^c}:\ x_j| x^{F_i^c}  \ \textrm{for some}\ 1\leq j\leq t)=(x^{F_i^c}:\  \sigma\nsubseteq F_i)$. Since for any $F_i$ with $\sigma\subseteq F_i$ and any $1\leq l\leq t$, there exists $F_j$ such that $\sigma\nsubseteq F_j$ and $x^{F_j^c}:x^{F_i^c}=x_l$ or equivalently $F_i\setminus \{x_l\}\subseteq  F_j$, so for any face $F\in \Delta$ with $\sigma\nsubseteq F$, one has $F\subseteq F_i\setminus \{x_l\}$ for some $1\leq l\leq t$ and some $1\leq i\leq m$ then $F\subseteq F_j$ for some $F_j$ not containing $\sigma$. Thus $\Delta\setminus \sigma=\langle F_j:\ \sigma\nsubseteq F_j\rangle$ and then $(I_{\Delta^{\vee}})^u=I_{(\Delta\setminus \sigma)^{\vee}}$. Also $(I_{\Delta^{\vee}})_u=(x^{F_i^c}:\ x_j\nmid x^{F_i^c}  \ \textrm{for any}\ 1\leq j\leq t)=(x^{F_i^c}:\  \sigma\subseteq F_i)$. Since for any $F_i$ with $\sigma\subseteq F_i$, one has $X\setminus F_i=(X\setminus \sigma)\setminus (F_i\setminus \sigma)$, so $(I_{\Delta^{\vee}})_u=I_{\T{lk}(\sigma)^{\vee}}$, where $I_{\T{lk}(\sigma)}$ is the Stanley-Reisner ideal of $\T{lk}(\sigma)$ on the vertex set $X\setminus \sigma$.
Thus $$\T{pd}(I_{\Delta^{\vee}})=\max\{\T{pd}(I_{(\Delta\setminus \sigma)^{\vee}}),\T{pd}(I_{\T{lk}(\sigma)^{\vee}})+|\sigma|\}.$$
Again using Theorem \ref{1.3}, the result follows.

$(ii)$ By Corollary \ref{corvd},
$$\T{reg}(I_{\Delta^{\vee}})=\max\{\T{reg}((I_{\Delta^{\vee}})^u),\T{reg}((I_{\Delta^{\vee}})_u)\}.$$ As was discussed above,
$(I_{\Delta^{\vee}})^u=I_{(\Delta\setminus \sigma)^{\vee}}$ and $(I_{\Delta^{\vee}})_u=I_{\T{lk}(\sigma)^{\vee}}$. Thus
$$\T{reg}(I_{\Delta^{\vee}})=\max\{\T{reg}(I_{(\Delta\setminus \sigma)^{\vee}}),\T{reg}(I_{\T{lk}(\sigma)^{\vee}})\}.$$
From the equalities $\T{pd}(R/I_{\Delta})=\T{pd}(I_{\Delta})+1$, $(I_{\Delta^{\vee}})^{\vee}=I_{\Delta}$, $\T{reg}(R/I_{\Delta^{\vee}})=\T{reg}(I_{\Delta^{\vee}})-1$ and  Theorem \ref{1.3}, one has  $\T{pd}(R/I_{\Delta})=\T{reg}(I_{\Delta^{\vee}})$, which completes the proof.
\end{proof}

\begin{rem}
Any $k$-decomposable simplicial complex is shellable and hence sequentially Cohen-Macaulay. So by \cite[Corollary 3.33]{MVi}, for a  $k$-decomposable simplicial complex $\Delta$, $\T{pd}(R/I_{\Delta})$ is equal to the big height of $I_{\Delta}$.
\end{rem}

\begin{rem} (Compare \cite[Corollary 2.11(ii)]{MK})
{\rm If $\Delta$ is a vertex decomposable simplicial complex on the set $X$ with the shedding vertex $x$ , then $I_{(\Delta\setminus \{x\})^{\vee}}=xJ_{(\Delta\setminus \{x\})^{\vee}}$, where $I_{\Delta\setminus\{x\}}$ and $J_{\Delta\setminus\{x\}}$ are Stanley-Reisner ideals of $\Delta\setminus\{x\}$
on the vertex sets $X$ and  $X\setminus \{x\}$, respectively. Thus $\T{pd}(R/I_{\Delta\setminus\{x\}})=\T{reg}(I_{(\Delta\setminus \{x\})^{\vee}})=\T{reg}(J_{(\Delta\setminus \{x\})^{\vee}})+1=\T{pd}(R/J_{\Delta\setminus\{x\}})+1$, where $R=\KK[X]$. Thus $$\T{pd}(R/I_{\Delta})=\max\{\T{pd}(R/J_{\Delta\setminus \{x\}})+1,\T{pd}(R/I_{\T{lk}(x)})\}.$$ Note that in \cite[Corollary 2.11]{MK},
$\Delta\setminus\{x\}$ and $\T{lk}(x)$ are considered on the vertex set $X\setminus \{x\}$.
}
\end{rem}




As an application, we give an upper bound for the regularity of $R/I(\mathcal{H})$ for a chordal clutter $\mathcal{H}$ in terms of the regularities of edge ideals of some minors of $\mathcal{H}$ which are chordal too. First we state the following easy lemma.

\begin{lem}\label{h}
Let $\mathcal{H}$ be a clutter, $\Delta=\Delta_{\mathcal{H}}$, $e=\{x,x_1,\ldots,x_{d}\}\in E(\mathcal{H})$ and $\sigma=e\setminus \{x\}$. Then $I_{\Delta\setminus \sigma}=(x_1\cdots x_d)+I(\mathcal{H}\setminus x_1)+\cdots+I(\mathcal{H}\setminus x_d)$ and $I_{\T{lk}(\sigma)}=I(\mathcal{H}/ \{x_1,\ldots,x_{d}\})$.
\end{lem}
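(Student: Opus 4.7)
The plan is to work directly with the Stanley--Reisner correspondence: a squarefree monomial $x^F$ lies in the Stanley--Reisner ideal of a simplicial complex $\Gamma$ exactly when $F$ is a non-face of $\Gamma$, so in each case I would identify the (minimal) non-faces of the complex in question and match them against the generators listed. Note that the minimal non-faces of $\Delta = \Delta_{\mathcal{H}}$ are by definition the edges of $\mathcal{H}$, so $I_\Delta = I(\mathcal{H})$.

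For the first equality, a set $F \subseteq X$ fails to lie in $\Delta \setminus \sigma$ precisely when either $F \notin \Delta$ (equivalently, $F$ contains some $e' \in E(\mathcal{H})$) or $\sigma \subseteq F$ (the condition built into the deletion). The former condition contributes the generators of $I(\mathcal{H})$, the latter contributes the single generator $x^\sigma = x_1\cdots x_d$, giving $I_{\Delta\setminus\sigma} = I(\mathcal{H}) + (x_1\cdots x_d)$. I would then split $E(\mathcal{H})$ according to whether $\sigma \subseteq e'$: edges containing $\sigma$ give generators divisible by $x_1\cdots x_d$ and so contribute nothing new, while any edge $e'$ not containing $\sigma$ misses some $x_i$ and hence $x^{e'} \in I(\mathcal{H}\setminus x_i)$. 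This immediately rewrites $I(\mathcal{H}) + (x_1\cdots x_d)$ as $(x_1\cdots x_d) + \sum_{i=1}^d I(\mathcal{H}\setminus x_i)$.

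For the second equality, I would pass to the ambient ring $\KK[X\setminus\sigma]$. By definition of the link, a set $N \subseteq X\setminus\sigma$ is a non-face of $\T{lk}(\sigma)$ iff $N\cup\sigma \notin \Delta$, iff $N\cup\sigma$ contains some edge $e'\in E(\mathcal{H})$, iff $N \supseteq e'\setminus\sigma$ for some $e'\in E(\mathcal{H})$. Hence the minimal non-faces of $\T{lk}(\sigma)$ are exactly the minimal elements of the family $\{e'\setminus\sigma : e'\in E(\mathcal{H})\}$, which by definition of iterated contraction are the edges of $\mathcal{H}/\{x_1,\ldots,x_d\}$. Therefore $I_{\T{lk}(\sigma)} = I(\mathcal{H}/\{x_1,\ldots,x_d\})$.

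There is no real obstacle here; the only minor care required is to keep track of which polynomial ring each ideal is viewed in (the full ring $R$ for $I_{\Delta\setminus\sigma}$ versus $\KK[X\setminus\sigma]$ for $I_{\T{lk}(\sigma)}$) and to observe that passing to minimal elements in the contraction is the same as generating an ideal and taking its minimal monomial generators, so that an edge $e' \supseteq \sigma$ (in particular $e$ itself, with $e\setminus\sigma = \{x\}$) is automatically absorbed.
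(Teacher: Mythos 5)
Your proposal is correct and follows essentially the same route as the paper: identify the (minimal) non-faces of $\Delta\setminus\sigma$ as those of $\Delta$ together with $\sigma$ itself, split the edges of $\mathcal{H}$ according to whether they contain $\sigma$, and characterize the non-faces of $\T{lk}(\sigma)$ via $F\cup\sigma\notin\Delta$, matching them with the contraction. The only cosmetic difference is that you make explicit the passage to minimal elements and the ambient-ring bookkeeping, which the paper leaves implicit.
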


\begin{proof}
Note that $I_{\Delta\setminus \sigma}=(x^F:\ F\in \mathcal{N}(\Delta\setminus \sigma))$, where $\mathcal{N}(\Delta\setminus \sigma)$ denotes the set of minimal non-faces of $\Delta\setminus \sigma$. One has $F\in \mathcal{N}(\Delta\setminus \sigma)$ if and only if either $F\in \mathcal{N}(\Delta)$ or $F=\sigma$. Thus $I_{\Delta\setminus \sigma}=(x_1\cdots x_{d})+I_{\Delta}=(x_1\cdots x_{d})+I(\mathcal{H})$. For any edge $e'\in E(\mathcal{H})$, if $\{x_1,\ldots,x_{d}\}\subseteq e'$, then $x^{e'}\in (x_1\cdots x_{d})$. Otherwise $x^{e'}\in I(\mathcal{H}\setminus x_i)$ for some $1\leq i\leq d$. Thus
$$(x_1\cdots x_{d})+I(\mathcal{H})=(x_1\cdots x_d)+I(\mathcal{H}\setminus x_1)+\cdots+I(\mathcal{H}\setminus x_d).$$
Also $I_{\T{lk}(\sigma)}=(x^F:\ F\subseteq V(\mathcal{H})\setminus \sigma,\ F\notin \T{lk}(\sigma))$. For any $F\subseteq V(\mathcal{H})\setminus \sigma$, one has $$F\notin \T{lk}(\sigma) \Leftrightarrow F\cup \sigma\notin \Delta \Leftrightarrow \exists\ e\in E(\mathcal{H})\ \ e\setminus \sigma\subseteq F.$$
Thus $I_{\T{lk}(\sigma)}=I(\mathcal{H}/ \{x_1,\ldots,x_{d}\})$.
\end{proof}

\begin{thm}
Let $\mathcal{H}$ be a chordal clutter, $x\in V(\mathcal{H})$ be a simplicial vertex for $\mathcal{H}$ and $e=\{x,x_1,\ldots,x_{d}\}$ be an edge of $\mathcal{H}$ containing $x$ and $\sigma=e\setminus \{x\}$. Then
\begin{itemize}
  \item [(i)] $\T{reg}(R/I(\mathcal{H}))=\max \{\T{reg}(R/((x_1\cdots x_d)+I(\mathcal{H})),\T{reg}(R/I(\mathcal{H}/ \{x_1,\ldots,x_{d}\}))+d\}$, and
  \item [(ii)] $\T{reg}(R/I(\mathcal{H}))\leq \max\{\sum_{i=1}^d \T{reg}(R/I(\mathcal{H}\setminus x_i))+(d-1),\T{reg}(R/I(\mathcal{H}/ \{x_1,\ldots,x_d\}))+d\}.$
\end{itemize}
\end{thm}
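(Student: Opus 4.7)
The plan is as follows. Part (i) should be a direct consequence of Theorem~\ref{regp}(i). Since $x$ is a simplicial vertex of the chordal clutter $\mathcal{H}$ and $e$ is an edge containing $x$, a result of Woodroofe \cite{W1} guarantees that $\Delta := \Delta_{\mathcal{H}}$ is $(d-1)$-decomposable with $\sigma = e \setminus \{x\} = \{x_1,\ldots,x_d\}$ as a shedding face (chordality passes to all minors, which is exactly the recursive condition needed). Applying Theorem~\ref{regp}(i) to $\Delta$ with this shedding face yields
\[
\T{reg}(R/I(\mathcal{H})) = \max\{\T{reg}(R/I_{\Delta\setminus\sigma}),\, \T{reg}(R/I_{\T{lk}(\sigma)}) + d\},
\]
and Lemma~\ref{h} then identifies $I_{\Delta\setminus\sigma} = (x_1\cdots x_d) + I(\mathcal{H})$ and $I_{\T{lk}(\sigma)} = I(\mathcal{H}/\{x_1,\ldots,x_d\})$, which is precisely (i).

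For (ii), part (i) reduces the task to establishing the single inequality
\[
\T{reg}(R/((x_1\cdots x_d) + I(\mathcal{H}))) \leq \sum_{i=1}^d \T{reg}(R/I(\mathcal{H}\setminus x_i)) + (d-1).
\]
The key combinatorial observation I would record is that the condition $\sigma \not\subseteq F$ is equivalent to ``$x_i \notin F$ for some $i$'', which yields the simplicial-complex decomposition
\[
\Delta\setminus\sigma = \bigcup_{i=1}^d (\Delta\setminus\{x_i\}).
\]
Translating to Stanley-Reisner ideals (union of complexes corresponds to intersection of ideals) produces the identity
\[
(x_1\cdots x_d) + I(\mathcal{H}) = \bigcap_{i=1}^d ((x_i) + I(\mathcal{H})),
\]
and since $(x_i) + I(\mathcal{H}) = (x_i) + I(\mathcal{H}\setminus x_i)$, one has $\T{reg}(R/((x_i) + I(\mathcal{H}))) = \T{reg}(R/I(\mathcal{H}\setminus x_i))$.

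The remaining step is to establish the Mayer--Vietoris regularity bound
\[
\T{reg}\!\left(R\big/\bigcap_{i=1}^d I_i\right) \leq \sum_{i=1}^d \T{reg}(R/I_i) + (d-1)
\]
for the specific squarefree monomial ideals $I_i = (x_i) + I(\mathcal{H})$. I would attempt this by induction on $d$ using the short exact sequence
\[
0 \to R/(A\cap B) \to R/A \oplus R/B \to R/(A+B) \to 0,
\]
together with the distributive law $(A\cap B) + C = (A+C)\cap (B+C)$ valid for monomial ideals. Each partial sum $\sum_{j\in S} I_j = (x_j : j\in S) + I(\mathcal{H})$ that arises in the iteration is, up to adjunction of variables, the edge ideal of the chordal minor $\mathcal{H}\setminus\{x_j : j\in S\}$, so the induction stays within the class of chordal clutters. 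The main obstacle is precisely the bookkeeping in this iteration: one must ensure the shifts from successive short exact sequences accumulate to exactly $(d-1)$ rather than to the larger value a naive iteration would give. This is where the chordality of every minor of $\mathcal{H}$ plays its essential role, allowing the regularities of the sum terms to be controlled inductively by the same bound applied to strictly smaller chordal clutters.
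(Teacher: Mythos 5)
Your part (i) is essentially the paper's argument: Woodroofe's results give that $\sigma=e\setminus\{x\}$ is a shedding face of the $k$-decomposable complex $\Delta_{\mathcal{H}}$ (the paper cites \cite[Lemma 5.1]{W1} and \cite[Corollary 5.4]{W1}, phrasing the decomposability as $(k-2)$-decomposable for $k$ the maximum edge cardinality), and then Theorem \ref{regp}(i) combined with Lemma \ref{h} gives the stated equality. That part is fine, as is your reduction of (ii) to the single inequality $\T{reg}(R/((x_1\cdots x_d)+I(\mathcal{H})))\leq \sum_{i=1}^d\T{reg}(R/I(\mathcal{H}\setminus x_i))+(d-1)$.

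For that remaining inequality, however, your route diverges from the paper's and is not actually completed. Your identity $(x_1\cdots x_d)+I(\mathcal{H})=\bigcap_{i=1}^d\bigl((x_i)+I(\mathcal{H})\bigr)$ is correct, but the Mayer--Vietoris induction you then sketch is exactly where the proof is missing: iterating the sequence $0\to R/(A\cap B)\to R/A\oplus R/B\to R/(A+B)\to 0$ forces you to control $\T{reg}(R/(J_{s-1}+I_s))$ where $J_{s-1}$ is itself an intersection, so the recursion spawns a tree of terms $\bigcap_{i<s}(I_i+I_s)$ that are not of the original form, and you have not shown how the shifts sum to $d-1$. You acknowledge this (``the main obstacle is precisely the bookkeeping''), and the appeal to chordality of minors to close the gap is a red herring --- nothing about chordality enters at this point. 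The paper avoids all of this by using the \emph{sum} decomposition from (the proof of) Lemma \ref{h}, namely $(x_1\cdots x_d)+I(\mathcal{H})=(x_1\cdots x_d)+I(\mathcal{H}\setminus x_1)+\cdots+I(\mathcal{H}\setminus x_d)$, and then applying the Kalai--Meshulam subadditivity theorem \cite[Theorem 1.4]{kmu}, $\T{reg}(R/(J_0+\cdots+J_d))\leq\sum\T{reg}(R/J_j)$ for squarefree monomial ideals, together with $\T{reg}(R/(x_1\cdots x_d))=d-1$. This yields the bound in one line. To repair your argument, replace the intersection decomposition and the inductive Mayer--Vietoris scheme by this sum decomposition plus the Kalai--Meshulam bound (or, alternatively, supply and prove a precise intersection analogue of that bound, which is the content you currently assume).
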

\begin{proof}
$(i)$ Since $x$ is a simplicial vertex, one may observe that $(x,e)$ is a neighborhood containment pair of $\mathcal{H}$.  Thus by \cite[Lemma 5.1]{W1}, $\sigma=e\setminus \{x\}$ is a shedding face of $\Delta=\Delta_{\mathcal{H}}$ and by \cite[Corollary 5.4]{W1}, $\Delta$ is $(k-2)$-decomposable, where $k$ is the maximum cardinality of edges in $\mathcal{H}$. Thus by Theorem \ref{regp}, $\T{reg}(R/I_{\Delta})=\max \{\T{reg}(R/I_{\Delta\setminus\sigma}),\T{reg}(R/I_{\T{lk}(\sigma)})+|\sigma|\}$.
Now, by Lemma \ref{h}, the result holds.

$(ii)$ Using the equality $(x_1\cdots x_{d})+I(\mathcal{H})=(x_1\cdots x_d)+I(\mathcal{H}\setminus x_1)+\cdots+I(\mathcal{H}\setminus x_d)$,  part $(i)$, \cite[Theorem 1.4]{kmu} and the fact that $\T{reg}(R/(x_1\cdots x_d))=d-1$, the result follows.
\end{proof}


\providecommand{\bysame}{\leavevmode\hbox
to3em{\hrulefill}\thinspace}

\end{document}